\documentclass[12pt,english,a4paper]{amsart}

\usepackage{fullpage}
\usepackage{hyperref}
\usepackage{amssymb}
\usepackage{enumerate}

\numberwithin{equation}{section}

{\theoremstyle{plain}
\newtheorem{theorem}{Theorem}[section]    
\newtheorem{lemma}[theorem]{Lemma}       
\newtheorem{proposition}[theorem]{Proposition}}

{\theoremstyle{remark}  
\newtheorem{remark}[theorem]{Remark}}

\begin{document}

\author{Arno Fehm}
\email{arno.fehm@tu-dresden.de}
\address{Institut f\"ur Algebra, Fakult\"at Mathematik, TU Dresden, 01062 Dresden, Germany}

\author{Fran\c{c}ois Legrand}
\email{francois.legrand@tu-dresden.de}
\address{Institut f\"ur Algebra, Fakult\"at Mathematik, TU Dresden, 01062 Dresden, Germany}

\title{A note on finite embedding problems \\ with nilpotent kernel}

\maketitle

\begin{abstract}
The first aim of this note is to fill a gap in the literature by proving that, given a global field $K$ and a finite set $\mathcal{S}$ of primes of $K$, every finite split embedding problem $G \rightarrow {\rm{Gal}}(L/K)$ over $K$ with nilpotent kernel has a solution ${\rm{Gal}}(F/K) \rightarrow G$ such that all primes in $\mathcal{S}$ are totally split in $F/L$. We then apply this to inverse Galois theory over division rings. Firstly, given a number field $K$ of level at least $4$, we show that every finite solvable group occurs as a Galois group over the division ring $H_K$ of quaternions with coefficients in $K$. Secondly, given a finite split embedding problem with nilpotent kernel over a finite field $K$, we fully describe for which automorphisms $\sigma$ of $K$ the embedding problem acquires a solution over the skew field of fractions $K(T, \sigma)$ of the twisted polynomial ring $K[T, \sigma]$.
\end{abstract}

\section{Introduction} \label{sec:intro}

The inverse Galois problem over a field $K$, a question which goes back to Hilbert and Noether, asks whether every finite group $G$ occurs as the Galois group of a Galois field extension $L/K$. By Shafarevich's theorem (see \cite[Theorem 9.6.1]{NSW08}), the answer to the latter question is affirmative if $K$ is a global field and $G$ is solvable. A refinement of the theorem, which is well-known to experts, is given by the following:

\begin{theorem} \label{thm:intro_0}
Let $K$ be a global field, $\mathcal{S}$ a finite set of primes of $K$, and $G$ a finite solvable group. There exists a Galois field extension $L/K$ of Galois group $G$ in which every prime in $\mathcal{S}$ is to\-tally split.
\end{theorem}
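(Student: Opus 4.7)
The plan is induction on $|G|$, invoking the main theorem of the paper (on split embedding problems with nilpotent kernel and total splitting at $\mathcal{S}$) at each step. The base case $|G|=1$ is handled by taking $L=K$.

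For the inductive step, let $A$ be a minimal normal subgroup of $G$; since $G$ is solvable, $A$ is elementary abelian, hence nilpotent. Set $\Gamma = G/A$, a solvable group with $|\Gamma|<|G|$. By the inductive hypothesis, there exists a Galois extension $L_0/K$ with ${\rm Gal}(L_0/K)\cong\Gamma$ in which every prime of $\mathcal{S}$ is totally split. This gives a finite embedding problem $G \twoheadrightarrow \Gamma = {\rm Gal}(L_0/K)$ with nilpotent kernel $A$. If this embedding problem splits, the main theorem of the paper produces a Galois extension $F/K$ containing $L_0$, with ${\rm Gal}(F/K) \cong G$ solving the problem and every prime of $L_0$ above $\mathcal{S}$ totally split in $F/L_0$; transitivity of total splitting then yields Theorem~\ref{thm:intro_0} in this case.

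The main obstacle is the non-split case. One strategy is to enlarge $\Gamma$ to a finite solvable group $\tilde\Gamma$, realizable by the inductive hypothesis (with total splitting at $\mathcal{S}$), surjecting onto $\Gamma$, and such that the extension class of $G \to \Gamma$ in $H^2(\Gamma, A)$ pulls back to zero in $H^2(\tilde\Gamma, A)$. Then the pullback $\tilde G := G \times_\Gamma \tilde\Gamma \to \tilde\Gamma$ is a split embedding problem with nilpotent kernel, to which the main theorem applies; the fixed field of $\ker(\tilde G \twoheadrightarrow G)$ inside the resulting Galois extension realizes $G$ over $K$ with the required total splitting. The delicate point is the construction of such $\tilde\Gamma$: it must be solvable, realizable within the induction, and sufficient to kill the obstruction cocycle, following the classical $p$-group cover ideas of Shafarevich and Iwasawa. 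Alternatively, one may appeal to the classical theorem that every finite embedding problem over a global field with nilpotent kernel is properly solvable, and then use the paper's main theorem on an auxiliary split problem to enforce total splitting at $\mathcal{S}$ after the fact.
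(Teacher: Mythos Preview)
Your inductive scheme and the split case are fine, but the non-split case is left as a sketch rather than a proof. You identify the obstruction and propose to ``enlarge $\Gamma$ to $\tilde\Gamma$'' so that the extension class dies, with $\tilde\Gamma$ still solvable and of order $<|G|$, but you do not construct such a $\tilde\Gamma$, nor is it clear from your outline why one with all three properties should exist. Your alternative suggestion---solve the non-split problem by the classical theorem and then ``enforce total splitting at $\mathcal{S}$ after the fact'' via some auxiliary split problem---is not a proof either: once a solution field is produced without local control, there is no mechanism given for retroactively imposing the splitting condition.

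The paper sidesteps the non-split case entirely by choosing a different inductive decomposition. Rather than quotienting by a minimal normal subgroup, it takes $N$ to be the Fitting subgroup of $G$ and invokes the structural fact (\cite[Propositions~9.6.8 and~9.6.9]{NSW08}) that there is a \emph{proper} subgroup $G'\leq G$ together with a surjection $\varphi\colon N\rtimes G'\twoheadrightarrow G$. One then realizes $G'$ over $K$ with total splitting at $\mathcal{S}$ by induction (since $|G'|<|G|$), applies the main theorem to the \emph{automatically split} embedding problem $N\rtimes G'\to G'\cong{\rm Gal}(L/K)$ with nilpotent kernel $N$, and finally passes to the fixed field of $\ker(\varphi)$. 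No cohomological obstruction ever has to be killed. In effect, the Fitting-subgroup result supplies exactly the ``$\tilde\Gamma$'' you were looking for (namely $G'$), together with a guarantee that it is strictly smaller than $G$; this is the missing ingredient in your argument.
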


Theorem \ref{thm:intro_0} is stated as (part of) an exercise on the last page of \cite[Chapter IX]{NSW08}, with the hint that the totally split condition can be guaranteed by going through the proof of Shafarevich's theorem. However, no detailed solution is provided in \cite{NSW08}. We point out that special cases of Theorem \ref{thm:intro_0} were published in the literature after the first edition of \cite{NSW08} appeared. For example, Kl\"uners and Malle (see \cite[Theorem 6.1]{KM04}) assume $K$ is a number field and obtain the weaker conclusion that every prime in $\mathcal{S}$ is unramified in $L/K$. This was later improved by Checcoli and the first author (see \cite[Theorem 2.2 and Appendix A]{CF21}), who prove Theorem \ref{thm:intro_0} if $K$ is a number field. To our knowledge, no proof of Theorem \ref{thm:intro_0} is available in the literature. Our first aim is to explain how Theorem \ref{thm:intro_0} can be deduced from the literature (see \S\ref{ssec:main_1}).

To that end, we will prove the following theorem (see \S\ref{sec:proof}) about finite split embedding problems with nilpotent kernels over global fields. Given a field $K$, recall (see, e.g., \cite[\S16.4]{FJ08}) that a {\it{finite embedding problem over $K$}} is an epimorphism $\alpha : G \rightarrow {\rm{Gal}}(L/K)$, where $G$ is a finite group and $L/K$ a Galois field extension, and that $\alpha$ {\it{splits}} if there is an embedding $\alpha' : {\rm{Gal}}(L/K) \rightarrow G$ such that $\alpha \circ \alpha' = {\rm{id}}_{{\rm{Gal}}(L/K)}$. A {\it{solution to $\alpha$}} is an isomorphism $\beta : {\rm{Gal}}(F/K) \rightarrow G$, where $F$ is a Galois field extension of $K$ containing $L$, such that $\alpha \circ \beta$ is the restriction map ${\rm{Gal}}(F/K) \rightarrow {\rm{Gal}}(L/K)$.

\begin{theorem} \label{thm:intro_1}
Let $K$ be a global field, $\mathcal{S}$ a finite set of primes of $K$, and $\alpha : G \rightarrow {\rm{Gal}}(L/K)$ a finite embedding problem over $K$. Assume ${\rm{ker}}(\alpha)$ is nilpotent and $\alpha$ splits. Then there is a solution ${\rm{Gal}}(F/K) \rightarrow G$ to $\alpha$ such that every prime $\mathfrak{P} \in \mathcal{S}$ is totally split in $F/L$ (that is, every prime $\mathfrak{Q}$ of $L$ extending $\mathfrak{P}$ is totally split in $F/L$).
\end{theorem}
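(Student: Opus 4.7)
My plan is to reduce Theorem~\ref{thm:intro_1} to the case of an elementary abelian $p$-group kernel and to handle that case with global class field theory, tracking the totally split condition throughout.

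\emph{Stage 1 (Sylow reduction).} Because $N := \ker(\alpha)$ is nilpotent, it is the direct product $N = \prod_p N_p$ of its Sylow subgroups, each characteristic in $N$ and hence normal in $G$. For every prime $p$ dividing $|N|$, the quotient $\alpha_p \colon G/\prod_{q \neq p} N_q \to \mathrm{Gal}(L/K)$ is again a split embedding problem (the section of $\alpha$ descends), now with $p$-group kernel $N_p$. Granting the theorem for $p$-group kernels, solve each $\alpha_p$ so that every prime of $\mathcal{S}$ is totally split in the resulting $F_p/L$. Since the $|N_p|$ are pairwise coprime, the extensions $F_p/L$ are linearly disjoint over $L$, and their compositum $F := \prod_p F_p$ solves $\alpha$ with the required local condition.

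\emph{Stage 2 (Reduction to elementary abelian kernel).} Assume $N$ is a nontrivial $p$-group and induct on $|N|$. Choose a minimal nontrivial normal subgroup $A$ of $G$ with $A \subseteq N$; as a minimal normal $p$-subgroup of $G$, the group $A$ is elementary abelian. The quotient $\bar\alpha \colon G/A \to \mathrm{Gal}(L/K)$ is split with $p$-group kernel $N/A$ strictly smaller than $N$, and by induction admits a solution $F'/K$ with every prime of $\mathcal{S}$ totally split in $F'/L$. One is thus reduced to solving the residual embedding problem $\alpha' \colon G \to \mathrm{Gal}(F'/K) \cong G/A$ with elementary abelian kernel $A$, while making the primes of $F'$ above $\mathcal{S}$ totally split in the resulting $F/F'$.

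\emph{Stage 3 (Base case via class field theory).} With $A$ an elementary abelian $p$-group regarded as a $\mathrm{Gal}(F'/K)$-module, solutions of $\alpha'$ satisfying the totally split condition correspond to $\mathrm{Gal}(F'/K)$-equivariant surjections from the Galois group of the maximal elementary abelian $p$-extension of $F'$ in which the primes above $\mathcal{S}$ are totally split, realizing the prescribed extension class in $H^{2}(\mathrm{Gal}(F'/K), A)$. Global class field theory identifies this Galois group with (the $p$-part of) a suitable generalized ray class group of $F'$, whose module structure over $\mathrm{Gal}(F'/K)$ is large enough to admit such surjections by Chebotarev's density theorem (which supplies an abundant supply of primes of $F'$ with prescribed Frobenius in any auxiliary $\mathrm{Gal}(F'/K)$-extension).

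\emph{Main obstacle.} The delicate point is Stage 2: the residual problem $\alpha'$ need not split, so the base case in Stage 3 must apply to non-split embedding problems whose $H^2$-obstruction nevertheless vanishes. Showing that the hypothesis ``$\alpha$ splits'' suffices requires comparing the obstruction classes of $\alpha$ and $\alpha'$ via the Lyndon--Hochschild--Serre spectral sequence for the tower $\mathrm{Gal}(F'/K) \twoheadrightarrow \mathrm{Gal}(L/K)$, together with the Poitou--Tate sequence to control $\mathrm{Sha}^{2}$. Ensuring that all of this remains compatible with the totally split requirement at $\mathcal{S}$ is where the argument is most technical.
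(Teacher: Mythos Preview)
Your Stage~1 is fine, and your overall plan is the natural first attempt, but the gap you flag as the ``main obstacle'' is in fact fatal to the proposal as written. In Stage~2 the residual problem $\alpha'\colon G\to G/A$ is in general \emph{not} split, and you give no argument that its obstruction class in $H^2(\mathrm{Gal}(F'/K),A)$ vanishes (or, equivalently, that $\alpha'$ has a weak solution compatible with the local constraints at $\mathcal{S}$). Saying that one should ``compare obstruction classes via the Lyndon--Hochschild--Serre spectral sequence'' and ``control $\Sha^2$ via Poitou--Tate'' is not a proof: the splitting of $\alpha$ gives you information about a class in $H^2(\mathrm{Gal}(L/K),N)$, not directly about the class of $\alpha'$ in $H^2(\mathrm{Gal}(F'/K),A)$, and the passage from one to the other is exactly the substance of the hard part of Shafarevich's theorem. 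Your Stage~3 is likewise only a hope: ``the module structure is large enough by Chebotarev'' is not an argument, and in any event you would need Stage~3 for non-split problems, which you have not set up.

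The paper's proof avoids this obstacle entirely by a different architecture. Instead of descending through a central series of $N$ (which forces one into non-split problems), it \emph{enlarges} the kernel: one replaces the given $p$-group kernel by a quotient $\mathcal{F}_p(n)/\mathcal{F}_p(n)^{(\nu)}$ of a free pro-$p$ $\mathrm{Gal}(L/K)$-operator group, for which the embedding problem is split by construction and which surjects onto any prescribed $p$-kernel. The split case with this universal kernel is then handled by quoting \cite[Theorem~9.6.7(i)]{NSW08} for $p\neq\mathrm{char}(K)$ and \cite[Theorem~B]{JR19} for $p=\mathrm{char}(K)$. The only new work is a trick to make the totally split condition at $\mathcal{S}$ fall out of those black boxes: one constructs an auxiliary Galois extension $L'/K$, linearly disjoint from $L$, in which every non-archimedean prime of $\mathcal{S}$ ramifies with degree prime to $p$; applying \cite[Theorem~9.6.7(i)]{NSW08} over $LL'$ then yields a solution totally split above $\mathcal{S}$ in $F/LL'$, and one descends to a solution over $L$. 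If you want to repair your approach, the key missing ingredient is precisely this free-operator-group reduction, which lets you stay in the split world throughout.
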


Theorem \ref{thm:intro_1} refines \cite[Theorem 9.6.6]{NSW08}, the main tool to prove Shafarevich's theorem, which asserts that every finite split embedding problem with nilpotent kernel over any given global field has a solution. 
Some special cases of Theorem \ref{thm:intro_1} are contained in the already mentioned works \cite{KM04} and \cite{CF21}, 
other special cases can be deduced from \cite[Theorem 14.3]{JR18} and \cite[Theorem B]{JR19}.

Our second aim is to contribute to inverse Galois theory over division rings. See \cite{DL20, ALP20, Beh21, BDL20f, Des20, Leg20} for some very recent results in this area. To state our results, we recall some definitions (see \S\ref{ssec:prelim_1} for more details). Firstly, for an automorphism $\sigma$ of a field $K$, we let $K[T, \sigma]$ be the ring of polynomials $a_0 + a_1 T + \cdots + a_n T^n$ with $n \geq 0$ and $a_0, \dots, a_n \in K$, whose addition is defined componentwise and multiplication fulfills $Ta = \sigma(a) T$ for $a \in K$. By $K(T, \sigma)$, we mean the unique division ring which contains $K[T, \sigma]$ and every element of which can be written as $ab^{-1}$ with $a \in K[T, \sigma]$ and $b \in K[T, \sigma] \setminus \{0\}$. If $\sigma={\rm{id}}_K$, we retrieve the usual commutative polynomial ring $K[T]$ and the rational function field $K(T)$, respectively. Secondly, recall that an extension $M/H$ of division rings is {\it{Galois}} (after Artin) if every element of $M$ which is fixed under every automorphism of $M$ fixing $H$ pointwise is in $H$. If $M/H$ is Galois, the automorphism group of $M/H$ is the {\it{Galois group}} ${\rm{Gal}}(M/H)$ of $M/H$.

Firstly, we combine Theorem \ref{thm:intro_0} and the main result from \cite{DL20} to get the following analogue of Shafarevich's theorem over division rings of quaternions. Recall that the {\it{level}} of a field $K$ is either the smallest positive integer $n$ such that there exist $x_1, \dots, x_n \in K$ with $-1 = x_1^2 + \cdots + x_n^2$ (if $-1$ can be written as the sum of finitely many squares in $K$), or $\infty$ (otherwise). See, e.g., \cite[Chapter XI, \S2]{Lam05} for more details.

\begin{theorem} \label{thm:intro_1.5}
Let $K$ be a number field of level at least 4 and $G$ a finite solvable group. Then $G$ occurs as the Galois group of a Galois extension of the division ring $H_K=K\oplus K\mathbf{i}\oplus K\mathbf{j}\oplus K\mathbf{k}$ ($\mathbf{i}^2= \mathbf{j}^2= \mathbf{k}^2 =\mathbf{i}\mathbf{j}\mathbf{k}=-1$) of quaternions with coefficients in $K$.
\end{theorem}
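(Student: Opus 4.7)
The plan is to combine Theorem~\ref{thm:intro_0} with the main result of \cite{DL20}: produce a Galois extension $L/K$ of number fields with group $G$ enjoying suitable splitting behavior at the ramified primes of $H_K$, then transfer this to the quaternionic setting via the base change $L \otimes_K H_K$.

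First, I would gather into a finite set $\mathcal{S}$ all primes of $K$, finite and archimedean, at which the quaternion algebra $H_K$ is ramified. The hypothesis that $K$ has level at least $4$ is equivalent to $-1$ not being a sum of two squares in $K$, which is exactly what guarantees that $H_K$ is a division algebra in the first place.

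Next, Theorem~\ref{thm:intro_0} produces a Galois extension $L/K$ with $\mathrm{Gal}(L/K) \cong G$ in which every prime of $\mathcal{S}$ is totally split. At each place $w$ of $L$ lying above some $v \in \mathcal{S}$, total splitting gives $L_w = K_v$, so the local Brauer invariant of $H_K$ at $v$ is preserved under base change. Therefore $H_L := L \otimes_K H_K$ remains ramified at every such $w$ and, in particular, is a quaternion division algebra over $L$ (equivalently, the level of $L$ is still at least $3$). I would then invoke the main result of \cite{DL20}, which in this situation yields that $H_L / H_K$ is a Galois extension of division rings in the sense of Artin whose Galois group is canonically identified with $\mathrm{Gal}(L/K) \cong G$, giving the desired realization of $G$ over $H_K$.

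The main obstacle is to identify precisely which ramification data of $H_K$ must be controlled so that the criterion of \cite{DL20} applies: one must ensure that imposing total splitting of every ramified prime of $H_K$ in $L/K$ is at once sufficient to preserve the division-algebra structure globally and to produce the expected Galois correspondence. It is exactly at this point that the refinement of Shafarevich's theorem provided by Theorem~\ref{thm:intro_0}, as opposed to the classical theorem or the weaker unramified version of \cite{KM04}, is needed, since the ramified places of $H_K$ are a fixed finite set that must be prescribed in advance.
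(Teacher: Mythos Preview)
Your argument is correct and follows the paper's strategy of combining Theorem~\ref{thm:intro_0} with the main result of \cite{DL20}. The paper is slightly more economical: rather than splitting the full ramification set of $H_K$ and invoking Brauer invariants, it uses Hasse--Minkowski to pick a single prime $\mathfrak{P}$ with $K_\mathfrak{P}$ of level at least $4$, takes $\mathcal{S}=\{\mathfrak{P}\}$, and observes that $L\subseteq K_\mathfrak{P}$ already forces $L$ to have level at least $4$, which is all that \cite[th\'eor\`eme 7]{DL20} requires.
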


Secondly, we combine Theorem \ref{thm:intro_1} and results from \cite{BDL20f}, which extends the notion of finite embedding problems over fields to the situation of division rings of finite dimension over their centers. To state our result, note that every finite split embedding problem $\alpha : G \rightarrow {\rm{Gal}}(L/K)$ with nilpotent kernel over a finite field $K$ acquires a solution over the global field $K(T)$. That is, we compose $\alpha$ and the inverse of the restriction map ${\rm{Gal}}(L(T)/K(T)) \rightarrow {\rm{Gal}}(L/K)$, which is an isomorphism, to get a finite embedding pro\-blem $G \rightarrow {\rm{Gal}}(L(T)/K(T))$ over $K(T)$. The latter embedding problem splits and has nilpotent kernel, and so has a solution. The next theorem fully describes the automorphisms $\sigma$ of $K$ for which $\alpha$ acquires a solution over the division ring $K(T, \sigma)$.

\begin{theorem} \label{thm:intro_2}
Let $\alpha : G \rightarrow {\rm{Gal}}(L/K)$ be a finite split embedding problem with nilpotent kernel over a finite field $K$ and $\sigma$ an automorphism of $K$.
Then $\alpha$ acquires a solution  over $K(T,\sigma)$ 
if and only if the order of $\sigma$ is coprime to $[L:K]$.
\end{theorem}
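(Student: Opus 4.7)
The plan is to reduce Theorem~\ref{thm:intro_2} to a question about classical Galois extensions of $K(T)$ via \cite{BDL20f}, and then resolve that question using Theorem~\ref{thm:intro_1}. The main step, and the one I expect to be the principal obstacle, is to extract from \cite{BDL20f} the following dictionary: writing $d$ for the order of $\sigma$, the embedding problem $\alpha$ acquires a solution over $K(T,\sigma)$ if and only if there exist a solution $F/K(T)$ of the base-changed embedding problem $\alpha_{K(T)} \colon G \to {\rm{Gal}}(L(T)/K(T))$ together with an automorphism $\tilde\sigma$ of $F$ of order exactly $d$ which fixes $T$ and restricts to $\sigma$ on $K$. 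Heuristically the solution over $K(T,\sigma)$ is built from $(F,\tilde\sigma)$ by a twisting construction, and conversely $F$ is recovered as a maximal subfield of the solution.

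For the only if direction, assume such $(F,\tilde\sigma)$ exists and set $m = [L:K]$. Restricting $\tilde\sigma$ to $L \subset F$ gives an extension of $\sigma$ to an automorphism of $L$ whose order divides $d$. Writing $K = \mathbb{F}_{p^a}$ and $\sigma = \phi_p^{a/d}$ (with $\phi_p$ the absolute Frobenius), every extension of $\sigma$ to $L = \mathbb{F}_{p^{am}}$ has the form $\phi_p^{a/d+ak}$ for some $k \in \{0,\dots,m-1\}$, with order $dm/\gcd(m,1+dk)$ on $L$. Requiring this to divide $d$ forces $m \mid 1+dk$, which is solvable in $k$ if and only if $\gcd(d,m)=1$.

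For the if direction, assume $\gcd(d,m)=1$, set $K_0 = K^\sigma$, and observe that the cyclic group ${\rm{Gal}}(L/K_0)$ of order $dm$ decomposes canonically as ${\rm{Gal}}(L/K) \times \langle\sigma\rangle$. Set $\tilde G := G \times \langle\sigma\rangle$ and consider the embedding problem
\[
\tilde\alpha \colon \tilde G \longrightarrow {\rm{Gal}}(L/K_0), \qquad (g,\tau) \longmapsto (\alpha(g),\tau),
\]
viewed over the global function field $K_0(T)$ via the canonical isomorphism ${\rm{Gal}}(L(T)/K_0(T)) \cong {\rm{Gal}}(L/K_0)$. It is split (by $\alpha' \times {\rm{id}}$, for $\alpha'$ a section of $\alpha$) with nilpotent kernel $\ker\alpha$, so Theorem~\ref{thm:intro_1} yields a solution $\tilde F / K_0(T)$. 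Unwinding the Galois correspondence, $\tilde F/K(T)$ is Galois with group $G \times \{1\} \cong G$ and solves $\alpha_{K(T)}$, while $\tilde\sigma := (e,\sigma) \in \tilde G = {\rm{Gal}}(\tilde F/K_0(T))$ is an automorphism of $\tilde F$ of order exactly $d$ fixing $T$ and lifting $\sigma$. The dictionary from the first paragraph then closes the argument.

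Once that dictionary is in place, the only if direction reduces to the short Frobenius calculation above and the if direction is a direct application of Theorem~\ref{thm:intro_1} to a well-chosen auxiliary split embedding problem with nilpotent kernel over the global function field $K_0(T)$; the remaining work is therefore concentrated in the first step, which requires a careful reading of \cite{BDL20f} and its formalism for Galois extensions of $K(T,\sigma)$.
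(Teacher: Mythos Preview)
Your overall architecture --- descend to $K_0 = K^{\langle\sigma\rangle}$, set up an auxiliary split embedding problem with nilpotent kernel over the global function field $K_0(T)$, and invoke Theorem~\ref{thm:intro_1} --- matches the paper's. The divergence is entirely in the bridge between $K_0(T)$ and $K(T,\sigma)$, which you correctly flag as the crux.

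The paper does \emph{not} use your dictionary. Instead, in the split case it forms the descended problem $\overline{\alpha}_{\sigma,\tau}\colon G \to {\rm Gal}(L^{\langle\tau\rangle}/K_0)$ (your $\tilde\alpha$ modulo $\{1\}\times\langle\sigma\rangle$, with $\tau$ the unique order-$d$ lift of $\sigma$ to $L$), solves it geometrically over $K_0(T)$ \emph{with the local condition} $F' \subseteq L^{\langle\tau\rangle}((T))$, and then applies \cite[lemme 4.2]{BDL20f} to produce the $(\sigma,\tau)$-geometric solution. The local condition is exactly where the totally-split clause of Theorem~\ref{thm:intro_1} is used (take $\mathcal S$ to be the place $T=0$). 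Your ``if'' construction, by contrast, applies Theorem~\ref{thm:intro_1} with no local constraint; if your dictionary were available in \cite{BDL20f} as stated, the totally-split refinement would be unnecessary here, which is at odds with how the paper actually argues. So either your dictionary is a genuinely stronger input than what \cite{BDL20f} provides, or it tacitly needs an extra hypothesis. A safe fix: impose that $\tilde F \subseteq L((T))$ via Theorem~\ref{thm:intro_1}, pass to $E = \tilde F^{\langle\tilde\sigma\rangle} \subseteq L^{\langle\tau\rangle}((T))$, and then you are exactly in the paper's situation and can quote \cite[lemme 4.2]{BDL20f}.

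For the ``only if'' direction, your Frobenius computation is correct but more work than needed. In the paper's setup, ``$\alpha$ acquires a solution over $K(T,\sigma)$'' presupposes that condition~\eqref{condL} holds for some $\tau$; Lemma~\ref{lemma_1} then immediately gives ${\rm gcd}(d,[L:K])=1$. No dictionary is required for this half.
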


\noindent
For a precise formulation of this, see Theorem~\ref{thm:app}, which is more general and relaxes the split assumption. The relevant definitions on finite embedding problems over division rings will be introduced in \S\ref{ssec:prelim_2}.

\section{Preliminaries} \label{sec:prelim}

We collect the material about division rings, finite embedding problems, and primes of global fields that will be used in the sequel.

\subsection{Division rings} \label{ssec:prelim_1}

In the following, a {\it{division ring}} is a non-zero (unital) ring in which all non-zero elements are invertible. Commutative division rings are nothing but {\it{fields}}.

Let $L/H$ be an extension (i.e., $H \subseteq L$) of division rings. The group of automorphisms of $L$ fixing $H$ pointwise is the {\it{automorphism group}} ${\rm{Aut}}(L/H)$ of $L/H$. Following Artin, we say that $L/H$ is {\it{Galois}} if every element of $L$ which is fixed under every element of ${\rm{Aut}}(L/H)$ is in $H$. If $L/H$ is Galois, ${\rm{Aut}}(L/H)$ is the {\it{Galois group}} ${\rm{Gal}}(L/H)$ of $L/H$.

A ring $R \not =\{0\}$ with no zero divisor is a {\it{right Ore domain}} if, for all $x, y \in R \setminus \{0\}$, there are $r, s \in R$ with $xr = ys \not=0$. If $R$ is a right Ore domain, there is a division ring $H$ which contains $R$ and every element of which can be written as $ab^{-1}$ with $a \in R$ and $b \in R \setminus \{0\}$ (see  \cite[Theo\-rem 6.8]{GW04}). Moreover, such a division ring $H$ is unique up to isomorphism (see \cite[Proposition 1.3.4]{Coh95}).

Let $H$ be a division ring and $\sigma$ an automorphism of $H$. The {\it{twisted polynomial ring}} $H[T, \sigma]$ is the ring of  polynomials $a_0 + a_1 T + \cdots + a_n T^n$ with $n \geq 0$ and $a_0, \dots, a_n \in H$, whose addition is defined componentwise and multiplication is given by 
$$\bigg(\sum_{i=0}^n a_i T^i \bigg) \cdot \bigg(\sum_{j=0}^m b_j T^j \bigg) = \sum_{k=0}^{n+m} \sum_{\ell=0}^k a_\ell \sigma^{\ell}(b_{k-\ell}) T^k.$$
Note that $H[T, \sigma]$ is commutative if and only if $H$ is a field and $\sigma={\rm{id}}_H$. In the sense of Ore (see \cite{Ore33}), $H[T, \sigma]$ is the twisted polynomial ring $H[T, \sigma, \delta]$ in the variable $T$, where the derivation $\delta$ is 0. The ring $H[T, \sigma]$ has no zero divisor, as the degree is additive on products, and is a right Ore domain (see \cite[Theorem 2.6 and Corollary 6.7]{GW04}). The unique division ring which contains $H[T, \sigma]$ and each element of which can be written as $ab^{-1}$ with $a \in H[T, \sigma]$ and $b \in H[T, \sigma] \setminus \{0\}$ is then denoted $H(T, \sigma)$. If $\sigma={\rm{id}}_H$, we write $H[T]$ and $H(T)$ instead of $H[T, {\rm{id}}_H]$ and $H(T, {\rm{id}}_H)$, respectively. If $H$ is a field, $H(T)$ is nothing but the usual field of fractions of the commutative polynomial ring $H[T]$.

\subsection{Finite embedding problems} \label{ssec:prelim_2}

First, let $L/H$ and $F/M$ be two Galois extensions of division rings with finite Galois groups, and such that $L \subseteq F$ and $H \subseteq M$. We write
$${\rm{res}}^{F/M}_{L/H}$$
for the restriction map ${\rm{Gal}}(F/M) \rightarrow {\rm{Gal}}(L/H)$ (that is, ${\rm{res}}^{F/M}_{L/H}(\sigma)(x)=\sigma(x)$ for every $\sigma \in {\rm{Gal}}(F/M)$ and every $x \in L$), if it is well-defined.

Unlike the commutative case, ${\rm{res}}^{F/M}_{L/H}$ is not always well-defined. The next result (see the special case III) of \cite[\S3.1]{BDL20f}) gives a practical situation where it is well-defined:

\begin{proposition} \label{prop:restriction}
Let $H$ be a division ring of finite dimension over its center. Let $L/H$ and $F/H$ be two Galois extensions of division rings with finite Galois groups and such that $L \subseteq F$. Then the restriction map ${\rm{res}}^{F/H}_{L/H}$ is well-defined.
\end{proposition}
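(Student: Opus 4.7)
The plan is to show that every $\sigma \in {\rm{Gal}}(F/H)$ stabilizes $L$ setwise, i.e.\ $\sigma(L) = L$. Once this is verified, the restriction $\sigma|_L$ is an injective $H$-ring endomorphism of $L$; and since $[L:H] = |{\rm{Gal}}(L/H)|$ is finite by Artin's theorem for division rings (applicable because ${\rm{Gal}}(L/H)$ is finite), $\sigma|_L$ is automatically surjective, hence an element of ${\rm{Gal}}(L/H)$. The assignment $\sigma \mapsto \sigma|_L$ then yields the desired group homomorphism ${\rm{Gal}}(F/H) \to {\rm{Gal}}(L/H)$, which is by definition ${\rm{res}}^{F/H}_{L/H}$.

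To establish $\sigma(L) = L$, I would exploit the hypothesis that $H$ is finite-dimensional over its center $Z := Z(H)$ to place the entire setting inside the framework of central simple algebras. Together with $[F:H] < \infty$ (again by Artin), one obtains $[F:Z] < \infty$, so $F$ is a CSA over its own center $Z(F)$, itself a finite extension of $Z$. The Skolem--Noether theorem then supplies strong rigidity: any $Z(F)$-linear automorphism of $F$ is inner, and in particular those elements of ${\rm{Gal}}(F/H)$ that act trivially on $Z(F)$ are realized as conjugation by elements of the centralizer $C_F(H)^\times$.

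The crucial step is to verify that $L$ is preserved by both families of automorphisms: the inner conjugations coming from $C_F(H)^\times$, and the residual action of ${\rm{Gal}}(F/H)$ on $Z(F)$. For the first, I would invoke the double centralizer theorem, which identifies $L$ intrinsically in terms of centralizers inside $F$ and thereby makes it stable under $C_F(H)^\times$-conjugation; for the second, an inductive reduction along the natural homomorphism ${\rm{Gal}}(F/H) \to {\rm{Aut}}(Z(F)/Z)$ should allow one to reduce to the inner case. Combined with the Galois hypothesis on $L/H$, this yields $\sigma(L) \subseteq L$, and the dimension count $[\sigma(L):H] = [L:H]$ forces equality.

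The main obstacle is the intrinsic identification of $L$ inside $F$: in the non-commutative setting a sub-division-ring is not automatically stable under the ambient Galois action, and pinning down $L$ via centralizers requires the double centralizer theorem together with the specific structure coming from ${\rm{Gal}}(L/H)$. This is precisely the content of \cite[\S3.1, III)]{BDL20f} cited by the authors, so I would appeal to it for the structural argument rather than reproduce it in full.
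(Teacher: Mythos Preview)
The paper gives no proof of this proposition; it simply records it as the special case III) of \cite[\S3.1]{BDL20f}. Your proposal likewise ends by appealing to exactly that citation for the crucial step (the stability of $L$ under ${\rm Gal}(F/H)$), so in substance you and the paper are doing the same thing: both defer to \cite{BDL20f}.

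The preliminary outline you add --- reduce to $\sigma(L)\subseteq L$, use Skolem--Noether to see that the $Z(F)$-linear part of ${\rm Gal}(F/H)$ consists of inner automorphisms by $C_F(H)^\times$, then argue stability separately for the inner and the ``outer'' pieces --- is a plausible shape for such an argument, but as written it is not an independent proof. The step you label ``the double centralizer theorem identifies $L$ intrinsically in terms of centralizers inside $F$'' is not justified: $L$ is given to us only as the fixed ring of a group of automorphisms of $L$ itself, not of $F$, so there is no immediate description of $L$ as a centralizer inside $F$; producing such an intrinsic description is precisely the content you hand off to the reference. The ``inductive reduction along ${\rm Gal}(F/H)\to{\rm Aut}(Z(F)/Z)$'' is similarly left unspecified. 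Since the paper itself provides nothing beyond the citation, there is no discrepancy to report; your sketch is consistent with, and no less complete than, the paper's own treatment.
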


Now, let $H$ be a division ring of finite dimension over its center. A {\it{finite embedding problem}} over $H$ is an epimorphism $\alpha : G \rightarrow {\rm{Gal}}(L/H)$, where $G$ and $L/H$ are a finite group and a Galois extension of division rings, respectively. We say that $\alpha$ {\it{splits}} if there is an embedding $\alpha' : {\rm{Gal}}(L/H) \rightarrow G$ with $\alpha \circ \alpha' = {\rm{id}}_{{\rm{Gal}}(L/H)}$. A {\it{weak solution}} to $\alpha$ is a monomorphism $\beta : {\rm{Gal}}(F/H) \rightarrow G$, where $F/H$ is a Galois extension of division rings with $L \subseteq F$, such that $\alpha \circ \beta$ is the restriction map ${\rm{res}}^{F/H}_{L/H}$ (which is well-defined by Proposition \ref{prop:restriction}). If $\beta$ is an isomorphism, we say {\it{solution}} instead of weak solution.

\begin{remark} \label{rk:fep}
Let $L/H$ be a Galois extension of division rings with ${\rm{Gal}}(L/H)$ finite. Then $H$ is a field if and only if $L$ is (see \cite[lemme 2.1 and th\'eor\`eme 2.2]{BDL20f}). Hence, the above terminology generalizes that of the commutative case (see \S\ref{sec:intro}).
\end{remark}

Finally, let $H$ be a division ring of finite dimension over its center and $\sigma$ an automorphism of $H$ of finite order. Let $\alpha : G \rightarrow {\rm{Gal}}(L/H)$ be a finite embedding problem over $H$ and $\tau$ an automorphism of $L$ of finite order exten\-ding $\sigma$. Assume this condition holds:
{\renewcommand{\theequation}{2.1}
\begin{equation}\label{condL}
\begin{minipage}{11cm}
{\it{$L(T, \tau) / H(T, \sigma)$ is Galois with finite Galois group, and the restriction map ${\rm{res}}^{L(T,\tau) / H(T, \sigma)}_{L/H}$ exists and is an isomorphism.}}
\end{minipage}
\end{equation}
Then
{\renewcommand{\theequation}{2.2}
\begin{equation} \label{eq:st}
\alpha_{\sigma, \tau} = ({\rm{res}}^{L(T, \tau)/H(T, \sigma)}_{L/H})^{-1} \circ \alpha : G \rightarrow {\rm{Gal}}(L(T, \tau)/H(T, \sigma))
\end{equation}
is a finite embedding problem over $H(T, \sigma)$, which is of finite dimension over its center (see \cite[lemme 2.3]{BDL20f}). A {\it{$(\sigma, \tau)$-geometric solution}} to $\alpha$ is a solution to $\alpha_{\sigma, \tau}$. If $\tau={\rm{id}}_L$ (and so $\sigma={\rm{id}}_H$), we say {\it{geometric solution}} for simplicity. By Remark \ref{rk:fep}, if $H$ is a field and ${\rm{Gal}}(E/H(T)) \rightarrow G$ a geometric solution to $\alpha$, then $E$ is a field.

\subsection{Primes of global fields} \label{ssec:prelim_3}

Recall that a field $K$ is {\it{global}} if $K$ is either a number field or a finitely generated field extension of a finite field with transcendence degree 1. If $K$ is a global field of characteristic $p > 0$, there is a transcendental $T$ such that $K$ is a finite separable extension of $\mathbb{F}_p(T)$.

Let $K$ be a global field. A {\it{prime}} of $K$ is an equivalence class of non-trivial absolute values on $K$. If $K$ is a number field, {\it{non-archimedean primes}} of $K$ are in 1-to-1 correspondence with maximal ideals of the ring of integers of $K$, and {\it{archimedean primes}} of $K$ are equivalence classes of non-trivial absolute values on $K$ whose restriction to $\mathbb{Q}$  is equivalent to the ``usual" absolute value. Now, if $K$ is global of characteristic $p > 0$, every prime of $K$ is {\it{non-archimedean}}. If $T$ is a transcendental as above, the set of  primes of $K$ is in bijection with the set $\mathfrak{S}_1 \cup \mathfrak{S}_2$, where $\mathfrak{S}_1$ is the set of maximal ideals of the integral closure of $\mathbb{F}_p[T]$ in $K$, and $\mathfrak{S}_2$ is the set of maximal ideals of the integral closure of $\mathbb{F}_p[1/T]$ in $K$ containing $1/T$.

For a prime $\mathfrak{P}$ of a global field $K$, we let $K_\mathfrak{P}$ denote the completion of $K$ at $\mathfrak{P}$. If $L/K$ is a Galois extension of global fields, we say that a prime $\mathfrak{P}$ of $K$ is {\it{totally split in $L/K$}} if $K_\mathfrak{P}$ equals the completion $L_{\mathfrak{P}'}$ of $L$ at any prime $\mathfrak{P}'$ of $L$ extending $\mathfrak{P}$. If $\mathfrak{P}$ is non-archimedean, then $\mathfrak{P}$ is totally split in $L/K$ if and only if both the ramification index and the residue degree of $L/K$ at (the maximal ideal corresponding to) $\mathfrak{P}$ equal 1. 

If $K \subseteq L \subseteq F$ are global fields such that $F/K$ and $L/K$ are Galois, and if $\mathfrak{P}$ is a prime of $K$, we say that $\mathfrak{P}$ is {\it{totally split in $F/L$}} if any prime $\mathfrak{Q}$ of $L$ extending $\mathfrak{P}$ is totally split in $F/L$. We also say that the completion of $L$ at $\mathfrak{Q}$ is the {\it{completion of $L$ at $\mathfrak{P}$}}. If $\mathfrak{P}$ is non-archimedean, the ramification index of $F/L$ at $\mathfrak{Q}$ and the residue field of $L$ at $\mathfrak{Q}$ are the {\it{ramification index of $F/L$ at $\mathfrak{P}$}} and the {\it{residue field of $L$ at $\mathfrak{P}$}}, respectively.

\section{Proofs of Theorems \ref{thm:intro_0}, \ref{thm:intro_1.5}, and \ref{thm:intro_2} under Theorem \ref{thm:intro_1}} \label{sec:main}

\subsection{Proof of Theorem \ref{thm:intro_0}} \label{ssec:main_1}

We proceed, as in the proof of Shafarevich's theorem given right after \cite[Proposition 9.6.9]{NSW08}, by induction on $|G|$. Suppose Theorem \ref{thm:intro_0} holds for any finite solvable group of order less than $|G|$. By \cite[Propositions 9.6.8 and 9.6.9]{NSW08}, there is a surjection $\varphi : N \rtimes G' \rightarrow G$, where $N$ is the (nilpotent) Fitting subgroup of $G$ and $G'$ is a proper subgroup of $G$. By the induction hypothesis, there is a Galois field extension $L/K$ of group $G'$ in which all primes in $\mathcal{S}$ are totally split. Let $\gamma : G' \rightarrow {\rm{Gal}}(L/K)$ be an isomorphism and ${\rm{pr}} : N \rtimes G' \rightarrow G'$ the projection on the second coordinate. Consider the finite embedding problem $\gamma \circ {\rm{pr}} : N \rtimes G' \rightarrow {\rm{Gal}}(L/K)$ over $K$; it splits and has nilpotent kernel $N\times \{1\}$. We may then apply Theorem \ref{thm:intro_1} to get the existence of a solution ${\rm{Gal}}(F/K) \rightarrow N \rtimes G'$ to $\gamma \circ {\rm{pr}}$ such that all primes in $\mathcal{S}$ are totally split in $F/L$. As the same holds in $L/K$, all primes in $\mathcal{S}$ are totally split in $F/K$. Then $F^{{\rm{ker}}(\varphi)}/K$ is a Galois field extension of group $G$, in which all primes in $\mathcal{S}$ are totally split.

\subsection{Proof of Theorem \ref{thm:intro_1.5}} \label{ssec:main_1.5}

As the number field $K$ has level at least 4, we may apply the Hasse--Minkowski theorem (see, e.g., \cite[p. 170]{Lam05}) to get the existence of a prime $\mathfrak{P}$ of $K$ such that the completion $K_\frak{P}$ of $K$ at $\frak{P}$ has level at least 4. By Theorem \ref{thm:intro_0}, there exists a Galois field extension $L/K$ of group $G$ such that $L \subseteq K_\mathfrak{P}$. In particular, $L$ has level at least 4. It then remains to apply \cite[th\'eor\`eme 7]{DL20} to conclude that the division ring $H_L$ of quaternions with coefficients in $L$ is a Galois extension of $H_K$ with Galois group $G$.

\subsection{Proof of Theorem \ref{thm:intro_2}} \label{ssec:main_2}

It is well-known that if $\alpha$ is a finite embedding problem with nilpotent kernel over a finite field $K$, then $\alpha$ has a geometric solution. Indeed, by the projectivity of the absolute Galois group of the finite field $K$ (see, e.g., \cite[Proposition 11.6.6]{FJ08} and \cite[Proposition 6.1.3]{GS17}), $\alpha$ has a weak solution. The existence of the latter and the {\it{weak$\rightarrow$split reduction}} (see \cite[\S1 B) 2)]{Pop96}) then provide a finite split embedding problem $\alpha'$ over $K$ which fulfills the following two properties:

\noindent
(i) ${\rm{ker}}(\alpha') \cong {\rm{ker}}(\alpha)$,

\noindent
(ii) if $\alpha'$ has a geometric solution, then $\alpha$ has a geometric solution.

\noindent
By (i) and the assumption that ${\rm{ker}}(\alpha)$ is nilpotent, ${\rm{ker}}(\alpha')$ is nilpotent. Hence, by \cite[Theorem 9.6.6]{NSW08}, the finite split embedding problem $\alpha'$ over the finite field $K$ has a geometric solution. It then remains to use (ii) to get that $\alpha$ has a geometric solution, as claimed.

We now provide the same conclusion over more division rings of the form $K(T, \sigma)$, where $\sigma$ is an automorphism of $K$. The next theorem generalizes Theorem \ref{thm:intro_2}.

\begin{theorem} \label{thm:app}
Let $\alpha : G \rightarrow {\rm{Gal}}(L/K)$ be a finite embedding problem with nilpotent kernel over a finite field $K$, let $\sigma \in {\rm{Aut}}(K)$, and let $d$ be the order of $\sigma$. Consider these three conditions:

\vspace{0.5mm}

\noindent
{\rm{(a)}} $\alpha$ has a weak solution $\gamma : {\rm{Gal}}(L'/K) \rightarrow G$ such that $d$ and $[L':K]$ are coprime,

\vspace{0.5mm}

\noindent
{\rm{(b)}} there exists $\tau \in {\rm{Aut}}(L)$ extending $\sigma$ such that $\alpha$ has a $(\sigma, \tau)$-geometric solution,

\vspace{0.5mm}

\noindent
{\rm{(c)}} $d$ and $[L:K]$ are coprime.

\vspace{0.5mm}

\noindent
Then we have the following four conclusions:

\vspace{0.5mm}

\noindent
{\rm{(1)}} {\rm{(a)}} $\Rightarrow$ {\rm{(b)}} $\Rightarrow$ {\rm{(c)}},

\vspace{0.5mm}

\noindent
{\rm{(2)}} if $\alpha$ splits, then {\rm{(a)}} $\Leftrightarrow$ {\rm{(b)}} $\Leftrightarrow$ {\rm{(c)}},

\vspace{0.5mm}

\noindent
{\rm{(3)}} if {\rm{(a)}} holds, then an automorphism $\tau$ of $L$ as in {\rm{(b)}} is unique,

\vspace{0.5mm}

\noindent
{\rm{(4)}} if {\rm{(c)}} fails, then \eqref{condL} fails for every $\tau \in {\rm{Aut}}(L)$ extending $\sigma$.
\end{theorem}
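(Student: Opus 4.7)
The plan rests on a structural lemma to be extracted from \cite{BDL20f}: for $\tau \in {\rm{Aut}}(L)$ of finite order extending $\sigma$, condition \eqref{condL} holds if and only if the order of $\tau$ equals $d$. Indeed, computing $[L(T,\tau):K(T,\sigma)]$ through the centres $L^\tau(T^{{\rm{ord}}(\tau)})$ and $K^\sigma(T^d)$ of the two cyclic algebras, the requirement $|{\rm{Gal}}(L(T,\tau)/K(T,\sigma))| = [L:K]$ forces the order of $\tau$ to be exactly $d$. A direct computation inside the cyclic group ${\rm{Aut}}(L) = {\rm{Gal}}(L/\mathbb{F}_p)$ then shows that a lift of $\sigma$ of order $d$ exists if and only if $\gcd(d,n) = 1$ (where $n = [L:K]$), in which case it is unique: writing $K = \mathbb{F}_{p^m}$, $\sigma = F^s|_K$, and each lift as $\tau = F^{s+mj}$ with $j \in \mathbb{Z}/n$, the condition ${\rm{ord}}(\tau)=d$ reduces to a single linear congruence in $j$ which is solvable precisely when $\gcd(d,n) = 1$ and then has a unique solution.

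These two facts together yield (4), and the implication (b)$\Rightarrow$(c) is its contrapositive. Conclusion (3) follows from the uniqueness of the order-$d$ lift once (a)$\Rightarrow$(c) is established, since it pins down $\tau$ as the restriction to $L$ of that unique lift. Conclusion (2) is immediate in the split case: the splitting $\alpha':{\rm{Gal}}(L/K)\to G$ is itself a weak solution with $L'=L$ and $[L':K]=n$, so (c) coincides with the coprimality hypothesis in (a).

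The essential content is thus the implication (a)$\Rightarrow$(b). Given a weak solution $\gamma:{\rm{Gal}}(L'/K)\to G$ with $\gcd(d,[L':K])=1$, the divisibility $n \mid [L':K]$ gives $\gcd(d,n)=1$, and the unique order-$d$ lifts $\tau'\in{\rm{Aut}}(L')$ and $\tau=\tau'|_L\in{\rm{Aut}}(L)$ of $\sigma$ both fulfil \eqref{condL}. Composing $\gamma$ with the inverse of the restriction isomorphism furnished by \eqref{condL} produces a weak solution $\widetilde\gamma:{\rm{Gal}}(L'(T,\tau')/K(T,\sigma))\to G$ to $\alpha_{\sigma,\tau}$ over the division ring $K(T,\sigma)$. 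I would then apply the weak-to-split reduction \cite[\S1 B) 2)]{Pop96} in this noncommutative setting to produce a finite split embedding problem $\alpha'''$ over $K(T,\sigma)$ with nilpotent kernel (isomorphic to $\ker(\alpha)$), such that any geometric solution to $\alpha'''$ yields a $(\sigma,\tau)$-geometric solution to $\alpha$. The split $\alpha'''$ would finally be solved by descending to its centre $K^\sigma(T^d)$, which is a global function field, invoking Theorem~\ref{thm:intro_1} there, and ascending the resulting solution back along the cyclic-algebra structure of $K(T,\sigma)/K^\sigma(T^d)$; the coprimality $\gcd(d,n)=1$ is again crucial for this ascent to be compatible with the data $(\tau,\tau')$. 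This last ``descend-solve-ascend'' step, which transfers a split embedding problem with nilpotent kernel across the centre of a division ring and rests on the machinery of \cite{BDL20f}, is the principal obstacle of the plan.
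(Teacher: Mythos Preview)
Your treatment of (2), (3), (4), and (b)$\Rightarrow$(c) matches the paper's: your structural lemma is a repackaging of Lemma~\ref{lemma_1} together with the elementary fact (which you correctly argue) that an order-$d$ lift of $\sigma$ to ${\rm Aut}(L)$ exists iff $\gcd(d,[L:K])=1$, and is then unique.

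For (a)$\Rightarrow$(b), your outline has the right ingredients but assembles them in an order that creates precisely the obstacle you flag. The paper avoids it by performing the weak$\to$split reduction \emph{before} passing to $K(T,\sigma)$, and at the level of the finite field $K$ rather than over the division ring. Concretely, \cite[proposition~5.3]{BDL20f} is a $(\sigma,\tau)$-aware version of the reduction which, from the weak solution $\gamma$ over $K$ together with the data $(\sigma,\tau,\tau')$, produces a split embedding problem $\alpha':G'\to{\rm Gal}(L'/K)$ over $K$ with $\ker(\alpha')\cong\ker(\alpha)$ and the property that a $(\sigma,\tau')$-geometric solution to $\alpha'$ yields a $(\sigma,\tau)$-geometric solution to $\alpha$. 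This split $\alpha'$ then descends along the isomorphism ${\rm res}^{L'/K}_{L'^{\langle\tau'\rangle}/K^{\langle\sigma\rangle}}$ (available because $\langle\tau'\rangle\times{\rm Gal}(L'/K)=\langle\tau',{\rm Gal}(L'/K)\rangle$) to a split embedding problem $\overline{\alpha'}_{\sigma,\tau'}$ with nilpotent kernel over the finite field $K^{\langle\sigma\rangle}$. That descent is purely commutative and bypasses your division-ring obstacle entirely; your proposed descent of a split problem from $K(T,\sigma)$ to its centre has no analogue in the paper and is not needed.

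There is a second point your plan leaves unspecified: the precise role of the totally-split clause in Theorem~\ref{thm:intro_1}. The paper applies that theorem to $\overline{\alpha'}_{\sigma,\tau'}$ over the global field $K^{\langle\sigma\rangle}(T)$, choosing $\mathcal{S}$ so that the resulting solution field satisfies $F'\subseteq L'^{\langle\tau'\rangle}((T))$. It is exactly this Laurent-series embedding that \cite[lemme~4.2]{BDL20f} consumes in order to ascend the geometric solution over $K^{\langle\sigma\rangle}(T)$ to a $(\sigma,\tau')$-geometric solution to $\alpha'$ over $K(T,\sigma)$. Without naming which prime is to be split and without the conclusion $F'\subseteq L'^{\langle\tau'\rangle}((T))$, the ascent step is unjustified; this is where Theorem~\ref{thm:intro_1} genuinely earns its keep in the argument, and your sketch does not yet make that connection.
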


Note that the existence of some weak solution to $\alpha$ is automatic from the projectivity of the absolute Galois group of the finite field $K$. 

As defined in \S\ref{ssec:prelim_2}, a $(\sigma, \tau)$-geometric solution to a finite embedding problem $\alpha$ over a division ring $H$ of finite dimension over its center is a solution to the finite embedding problem $\alpha_{\sigma, \tau}$ over $H(T, \sigma)$, which is introduced in \eqref{eq:st}. To make sure that $\alpha_{\sigma, \tau}$ is well-defined, we assumed \eqref{condL}. In the next lemma, of which Condition (1) is nothing but \eqref{condL}, we make \eqref{condL} explicit, if $H$ is a finite field.

\begin{lemma} \label{lemma_1}
Let $L/K$ be an extension of finite fields, $\sigma \in {\rm{Aut}}(K)$, and $\tau \in {\rm{Aut}}(L)$ extending $\sigma$. Let $d$ denote the order of $\sigma$. The following three conditions are equivalent:

\vspace{0.5mm}

\noindent
{\rm{(1)}} $L(T, \tau) / K(T, \sigma)$ is Galois with finite Galois group, and the restriction map ${\rm{res}}^{L(T,\tau) / K(T, \sigma)}_{L/K}$ exists and is an isomorphism,

\vspace{0.5mm}

\noindent
{\rm{(2)}} $\tau$ has order $d$, and $d$ and $[L:K]$ are coprime,

\vspace{0.5mm}

\noindent
{\rm{(3)}} $\tau$ has order $d$, and the subgroup $\langle \tau, {\rm{Gal}}(L/K) \rangle$ of ${\rm{Aut}}(L)$ equals $\langle \tau \rangle \times {\rm{Gal}}(L/K)$.
\end{lemma}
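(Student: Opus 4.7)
The strategy is to prove $(2) \Leftrightarrow (3)$, then $(2) \Rightarrow (1)$, and finally $(1) \Rightarrow (2)$. Throughout, the key structural fact is that ${\rm{Aut}}(L)$ is cyclic (being ${\rm{Gal}}(L/\mathbb{F}_p)$ for the prime subfield of $L$), which controls the interaction of $\langle \tau \rangle$ and ${\rm{Gal}}(L/K)$ inside ${\rm{Aut}}(L)$. The equivalence $(2) \Leftrightarrow (3)$ is then pure group theory: both require $|\tau| = d$, and in a cyclic group the subgroups $\langle \tau \rangle$ (order $d$) and ${\rm{Gal}}(L/K)$ (order $[L:K]$) intersect trivially iff $\gcd(d,[L:K])=1$, which in an abelian ambient group is equivalent to the generated subgroup being their internal direct product.

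For $(2) \Rightarrow (1)$, each $g \in {\rm{Gal}}(L/K)$ extends to a ring automorphism $\widetilde g$ of $L(T,\tau)$ defined by $\widetilde g(T) = T$ and $\widetilde g|_L = g$; this is well-defined because $g$ and $\tau$ commute in the abelian group ${\rm{Aut}}(L)$, and it fixes $K(T,\sigma)$ pointwise. The assignment $g \mapsto \widetilde g$ embeds ${\rm{Gal}}(L/K)$ into ${\rm{Aut}}(L(T, \tau)/K(T, \sigma))$. A direct calculation using $Ta = \tau(a) T$ shows the centralizer of $K(T,\sigma)$ in $L(T,\tau)$ equals $L^\tau(T^d)$, which under $(2)$ coincides with $Z(L(T, \tau)) = L^\tau(T^{|\tau|}) = L^\tau(T^d)$. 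Thus every inner automorphism of $L(T, \tau)/K(T, \sigma)$ is by a central element and hence trivial, so by Skolem--Noether the restriction-to-center map ${\rm{Aut}}(L(T, \tau)/K(T, \sigma)) \hookrightarrow {\rm{Gal}}(L^\tau(T^d)/K^\sigma(T^d))$ is injective. The target has order $[L^\tau : K^\sigma] = [L:K]$, the latter equality using $K \cdot L^\tau = L$ and $K \cap L^\tau = K^\sigma$, both consequences of $\gcd(d, [L:K]) = 1$. Matching the $[L:K]$ automorphisms already produced, we conclude $|{\rm{Aut}}(L(T,\tau)/K(T,\sigma))| = [L:K]$ and all of them have the form $\widetilde g$. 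Combined with $[L(T, \tau) : K(T, \sigma)]_{\rm{right}} = [L:K]$ (from the free right $K[T,\sigma]$-module structure of $L[T,\tau]$ of rank $[L:K]$) and Artin's theorem, the extension is Galois with fixed field $K(T, \sigma)$, and the restriction map is the inverse isomorphism onto ${\rm{Gal}}(L/K)$.

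For $(1) \Rightarrow (2)$, let $m = |\tau|$; since $\tau$ extends $\sigma$, one has $d \mid m$. If $m > d$, the same centralizer calculation still gives $L^\tau(T^d)$, which now strictly contains $Z(L(T, \tau)) = L^\tau(T^m)$. Conjugation by elements of $L^\tau(T^d)^\ast \setminus L^\tau(T^m)^\ast$ then produces non-trivial inner automorphisms of $L(T,\tau)/K(T,\sigma)$, and the multiplicative quotient $L^\tau(T^d)^\ast / L^\tau(T^m)^\ast$ is infinite (thanks to the infinitely many places of the rational function field $L^\tau(T^d)$). This contradicts the finiteness assumption in $(1)$, forcing $m = d$. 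Cyclicity of ${\rm{Aut}}(L)$ then yields $\gcd(d, [L:K]) = 1$: since $\tau$ has order $d$ and restricts to $\sigma$ also of order $d$, the subgroup $\langle \tau \rangle$ maps isomorphically onto $\langle \sigma \rangle$ under the restriction ${\rm{Aut}}(L) \to {\rm{Aut}}(K)$ whose kernel is ${\rm{Gal}}(L/K)$, forcing $\langle \tau \rangle \cap {\rm{Gal}}(L/K) = \{1\}$. The main technical obstacle is the infinite-quotient step: because $L^\tau$ is finite, one cannot produce infinitely many cosets by varying a scalar, so I instead exploit the infinitude of places of $L^\tau(T^d)$ to exhibit inequivalent cosets in $L^\tau(T^d)^\ast / L^\tau(T^m)^\ast$.
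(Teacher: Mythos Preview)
Your argument is correct and takes a genuinely different route from the paper. The paper disposes of $(1)\Leftrightarrow(3)$ in one line by citing \cite[corollaire 3.4 and proposition 3.8]{BDL20f}, and then observes $(2)\Leftrightarrow(3)$ exactly as you do, via the cyclicity of ${\rm Aut}(L)$. You instead prove $(1)\Leftrightarrow(2)$ from scratch: for $(2)\Rightarrow(1)$ you build the automorphisms $\widetilde g$ explicitly, identify the centralizer of $K(T,\sigma)$ with the center via a coefficient-by-coefficient computation (valid e.g.\ after passing to twisted Laurent series and intersecting back), and then use Skolem--Noether and Artin's lemma to pin down both the automorphism group and the fixed ring; for $(1)\Rightarrow(2)$ you show that $m:=|\tau|>d$ forces infinitely many inner automorphisms fixing $K(T,\sigma)$, via the quotient $L^\tau(T^d)^\ast/L^\tau(T^m)^\ast$. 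The last step deserves one more sentence of care: when $p\mid m/d$ the extension $L^\tau(T^d)/L^\tau(T^m)$ is (partly) purely inseparable and no place splits, but the quotient is still infinite because each place contributes a factor $\mathbb Z/p^a\mathbb Z$ and there are infinitely many places --- your ``infinitely many places'' heuristic is right, it just needs the ramified rather than the split mechanism in that case. What your approach buys is a self-contained proof that makes the structure of ${\rm Aut}(L(T,\tau)/K(T,\sigma))$ completely explicit; what the paper's approach buys is brevity, since the relevant structural facts have already been established in the generality of \cite{BDL20f}.
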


\begin{proof}
The equivalence (1) $\Leftrightarrow$ (3) is a special case of \cite[corollaire 3.4 and proposition 3.8]{BDL20f}. It then suffices to show that (2) and (3) are equivalent. To that end, note that $\langle \tau \rangle$ and ${\rm{Gal}}(L/K)$ are subgroups of the cyclic group ${\rm Aut}(L)$. Hence, $\langle \tau, {\rm{Gal}}(L/K) \rangle = \langle \tau \rangle \times {\rm{Gal}}(L/K)$ if and only if the order of $\tau$ and $[L:K]$ are coprime, thus showing (2) $\Leftrightarrow$ (3).
\end{proof}

\begin{proof}[Proof of Theorem \ref{thm:app}]
We first prove (1) and (3) simultaneously. Since (b) $\Rightarrow$ (c) follows from (1) $\Rightarrow$ (2) in Lemma \ref{lemma_1}, it suffices to prove (a) $\Rightarrow$ (b) and the uniqueness of $\tau$ under (a). To that end, let $\gamma : {\rm{Gal}}(L'/K) \rightarrow G$ be a weak solution to $\alpha$ such that $d$ and $[L':K]$ are coprime. In particular, ${\rm{gcd}}(d, [L:K])=1$ and, consequently, there is $\tau \in {\rm{Aut}}(L)$ of order $d$ extending $\sigma$, and $\tau$ is necessarily uni\-que. From (2) $\Leftrightarrow$ (3) in Lemma \ref{lemma_1}, we get $\langle \tau, {\rm{Gal}}(L/K) \rangle = \langle \tau \rangle \times {\rm{Gal}}(L/K)$. Similarly, there is a unique $\tau' \in {\rm{Aut}}(L')$ of order $d$ extending $\sigma$, which actually extends $\tau$, and, from (2) $\Leftrightarrow$ (3) in Lemma \ref{lemma_1}, we get $\langle \tau', {\rm{Gal}}(L'/K) \rangle = \langle \tau' \rangle \times {\rm{Gal}}(L'/K).$ We may then apply the weak$\rightarrow$split reduction for finite embedding problems over division rings \cite[proposition 5.3]{BDL20f} to get the existence of a finite split embedding problem $\alpha' : G' \rightarrow {\rm{Gal}}(L'/K)$ over $K$ fulfilling the following two properties:

\noindent
(i) ${\rm{ker}}(\alpha') \cong {\rm{ker}}(\alpha)$, 

\noindent
(ii) if $\alpha'$ has a $(\sigma, \tau')$-geometric solution, then $\alpha$ has a $(\sigma, \tau)$-geometric solution.

Now, let $K^{\langle \sigma \rangle}$ (resp., $L'^{\langle \tau' \rangle}$) be the fixed field of $\langle \sigma \rangle$ (resp., of $\langle \tau' \rangle$) in $K$ (resp., in $L'$). As $\langle \tau', {\rm{Gal}}(L'/K) \rangle = \langle \tau' \rangle \times {\rm{Gal}}(L'/K)$ (see the previous paragraph), we may apply \cite[lemme 3.5]{BDL20f} to get that $L'^{\langle \tau' \rangle}/K^{\langle \sigma \rangle}$ is Galois. Moreover, as the orders of $\sigma$ and $\tau'$ are equal, we may apply \cite[lemme 2.4]{BDL20f} to get that $L'^{\langle \tau' \rangle}$ and $K$ are linearly disjoint over $K^{\langle \sigma \rangle}$, and that $L' = L'^{\langle \tau' \rangle} K$. Therefore, ${\rm{res}}^{L'/K}_{L'^{\langle \tau' \rangle}/K^{\langle \sigma \rangle}}$ is an isomorphism. Hence, $${\overline{\alpha'}}_{\sigma, \tau'}  = {\rm{res}}^{L'/K}_{L'^{\langle \tau' \rangle}/K^{\langle \sigma \rangle}} \circ \alpha' : G' \rightarrow {\rm{Gal}}(L'^{\langle \tau' \rangle}/K^{\langle \sigma \rangle})$$ 
is a finite embedding problem over $K^{\langle \sigma \rangle}$, which splits and has nilpotent kernel (by (i) and the assumption on ${\rm{ker}}(\alpha)$). Theorem \ref{thm:intro_1} then yields that ${\overline{\alpha'}}_{\sigma, \tau'}$ has a geometric solution ${\rm{Gal}}(F'/K^{\langle \sigma \rangle}(T)) \rightarrow G'$ such that $F' \subseteq L'^{\langle \tau' \rangle}((T))$. Hence, by \cite[lemme 4.2]{BDL20f}, $\alpha'$ has a $(\sigma, \tau')$-geometric solution. It then remains to apply (ii) to conclude.

Now, we prove (2). To that end, assume $\alpha$ splits. By (1), it suffices to prove (c) $\Rightarrow$ (a). As $\alpha$ splits, there is an embedding $\alpha' : {\rm{Gal}}(L/K) \rightarrow G$ such that $\alpha \circ \alpha' = {\rm{id}}_{{\rm{Gal}}(L/K)}$. Then $\alpha'$ is a weak solution to $\alpha$ and, if (c) holds, then (a) holds with $\gamma = \alpha'$.

Finally, we prove (4). If (c) fails, then Condition (2) from Lemma \ref{lemma_1} fails too. Then, from (1) $\Leftrightarrow$ (2) in Lemma \ref{lemma_1}, we get that \eqref{condL} also fails.
\end{proof}

\section{Proof of Theorem \ref{thm:intro_1}} \label{sec:proof}

Finally, we proceed to the proof of Theorem \ref{thm:intro_1}. For the convenience of the reader, we restate the theorem here:

\begin{theorem} \label{thm:main}
Let $K$ be a global field, $\mathcal{S}$ a finite set of primes of $K$, and $\alpha : G \rightarrow {\rm{Gal}}(L/K)$ a finite embedding problem over $K$. Assume ${\rm{ker}}(\alpha)$ is nilpotent and $\alpha$ splits. Then there exists a solution ${\rm{Gal}}(F/K) \rightarrow G$ to $\alpha$ such that every prime $\mathfrak{P} \in \mathcal{S}$ is totally split in $F/L$.
\end{theorem}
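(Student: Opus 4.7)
The plan is to prove the theorem by induction on $|N|$ with $N = \ker(\alpha)$, following the cohomological proof of Shafarevich's Theorem (\cite[Theorem 9.6.6]{NSW08}) on finite split embedding problems with nilpotent kernel over a global field, and adding at each step the local bookkeeping that the primes in $\mathcal{S}$ split completely in $F/L$.

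For the inductive step, I exploit the nilpotency of $N$ via the classical reduction that separates Sylow components and then filters each $p$-part by a characteristic series with elementary abelian quotients. At each step, one picks a suitable $G$-normal $M \subsetneq N$, applies the induction hypothesis to $G/M \to \mathrm{Gal}(L/K)$ to obtain an intermediate $F_1/K$ with $\mathcal{S}$ totally split in $F_1/L$, and then treats the residual problem $G \to \mathrm{Gal}(F_1/K)$ (with $L$ replaced by $F_1$) by induction to reach the final $F/K$. Transitivity of ``totally split'' along the tower $F \supset F_1 \supset L$ then gives $\mathcal{S}$ totally split in $F/L$. Iterating reduces the problem to the base case where $N$ is an elementary abelian $\ell$-group on which $H = \mathrm{Gal}(L/K)$ acts irreducibly.

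In the base case, the set of lifts of the projection $G_K \twoheadrightarrow H$ to homomorphisms $G_K \to G$ is a torsor under $Z^1(G_K, N)$, with isomorphism classes parametrized by $H^1(G_K, N)$; by irreducibility of the $H$-action on $N$, a lift with class $[c] \in H^1(G_K, N)$ is surjective onto $G$ iff $[c]$ is not inflated from $H^1(H, N)$. The condition that $\mathfrak{P} \in \mathcal{S}$ be totally split in $F/L$ amounts to $[c]$ having trivial restriction to a decomposition subgroup at $\mathfrak{P}$, i.e.\ that $[c] \in \ker(\mathrm{loc})$, where $\mathrm{loc}: H^1(G_K, N) \to \prod_{\mathfrak{P} \in \mathcal{S}} H^1(K_\mathfrak{P}, N)$ is the localization map. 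The main obstacle is to produce a class in $\ker(\mathrm{loc})$ outside the finite inflation image; but this is essentially automatic, since $H^1(G_K, N)$ is countably infinite (by Kummer-style or class-field-theoretic descriptions of finite abelian extensions of the global field $K$), whereas the codomain of $\mathrm{loc}$ and the source $H^1(H, N)$ of inflation are both finite. Hence $\ker(\mathrm{loc})$ is infinite while the inflation image is finite, so their set-theoretic difference is nonempty (in fact infinite); any element therein yields a surjective lift totally split at $\mathcal{S}$. Overall, the proof amounts to inserting this local bookkeeping into the cohomological proof of \cite[Theorem 9.6.6]{NSW08} and its supporting results in \cite[Chapter IX]{NSW08}, as hinted by the authors of \cite{NSW08} in the exercise discussed near Theorem~\ref{thm:intro_0}.
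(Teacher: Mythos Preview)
Your reduction step has a genuine gap. You induct on $|N|$ for the theorem as stated, which requires the embedding problem to be \emph{split}. In the inductive step you pick $M\lhd G$ with $1\neq M\subsetneq N$, solve $G/M\to\mathrm{Gal}(L/K)$ (this is split, fine), and then apply the induction hypothesis to the residual problem $G\to\mathrm{Gal}(F_1/K)\cong G/M$ with kernel $M$. But $G=N\rtimes H\to (N/M)\rtimes H$ need not split: a section would give an $H$-equivariant complement to $M$ in $N$, which fails already for $N$ extraspecial of order $p^3$ and $M=\Phi(N)$. So the induction hypothesis does not apply, and your base-case argument (the torsor description starting from the section, and the surjectivity criterion ``$[c]\notin\mathrm{im}(\inf)$'') is written precisely for the split situation. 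This is not a technicality one can patch by a sentence: it is exactly the obstacle that makes Shafarevich's theorem hard and forces \cite[Chapter IX]{NSW08} to pass through the free pro-$p$-operator groups $\mathcal{F}_p(n)/\mathcal{F}_p(n)^{(\nu)}$ and the Scholz-type bookkeeping of Theorem~9.6.7, rather than the naive d\'evissage you describe.

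For comparison, the paper does not attempt an elementary cohomological counting argument. It follows the NSW08 reduction to the kernels $\mathcal{F}_p(n)/\mathcal{F}_p(n)^{(\nu)}$ and then, for $p\neq\mathrm{char}(K)$, constructs an auxiliary extension $L'/K$ linearly disjoint from $L$ in which every non-archimedean $\mathfrak{P}\in\mathcal{S}$ ramifies with degree prime to $p$; this places one in the range where \cite[Theorem~9.6.7(i)]{NSW08} already yields a solution totally split at $\mathcal{S}$ in $F/LL'$, and a short local computation descends this to $F/L$. The case $p=\mathrm{char}(K)$ is handled by quoting \cite[Theorem~B]{JR19} with prescribed local behaviour. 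Your base-case counting (that $H^1(G_K,N)$ is infinite while $\prod_{\mathfrak{P}\in\mathcal{S}}H^1(K_\mathfrak{P},N)$ and $H^1(H,N)$ are finite) is correct and pleasant when the problem is genuinely split with irreducible abelian kernel, but it does not by itself circumvent the non-split steps that arise in your filtration.
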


The structure of the proof is similar to that of \cite[Theorem 9.6.6]{NSW08}. Namely, we first reduce Theorem \ref{thm:main} to the case of finite split embedding problems whose kernels are certain $p$-groups (see \S\ref{ssec:proof_1}). The latter case is then proved in two steps, depending on whether $p$ equals the characteristic of $K$ (see \S\ref{ssec:proof_2} and \S\ref{ssec:proof_3}).

\subsection{General reduction} \label{ssec:proof_1}

For a prime number $p$ and an integer $n \geq 1$, let $\mathcal{F}_p(n)$ be the free pro-$p$-${\rm{Gal}}(L/K)$ operator group of rank $n$ as defined before \cite[Proposition 9.6.3]{NSW08}. For $\nu=(i,j)$ with $i \geq j \geq 1$, we let $\mathcal{F}_p(n)^{(\nu)}$ denote the filtration of $\mathcal{F}_p(n)$ refining the descending $p$-central series as in \cite[Definition 3.8.7]{NSW08}. Since every finite nilpotent group is a direct product of its Sylow subgroups, and each finite ${\rm{Gal}}(L/K)$-operator $p$-group is a quotient of $\mathcal{F}_p(n)/\mathcal{F}_p(n)^{(\nu)}$ for some $n$ and $\nu$ (see right after \cite[Theorem 9.6.6]{NSW08}), Theorem \ref{thm:intro_1} reduces to proving the following statement, which partially refines \cite[Theorem 9.6.7]{NSW08}, for every prime number $p$:
{\renewcommand{\theequation}{$4.1$}
\begin{equation}\label{dagger}
\begin{minipage}{14cm}
{\it{For each integer $n \geq 1$ and each $\nu=(i,j)$, the finite split embedding problem 
$${\rm{pr}} : \mathcal{F}_p(n) / \mathcal{F}_p(n)^{(\nu)} \rtimes {\rm{Gal}}(L/K) \rightarrow {\rm{Gal}}(L/K)$$ 
over the field $K$, given by the projection on the second coordinate, has a solution 
$$\gamma : {\rm{Gal}}(F/K) \rightarrow \mathcal{F}_p(n) / \mathcal{F}_p(n)^{(\nu)} \rtimes {\rm{Gal}}(L/K)$$ 
such that every prime $\mathfrak{P} \in \mathcal{S}$ is totally split in $F/L$.}}
\end{minipage}
\end{equation}

We break the proof into two parts. Let $p_0 \geq 0$ be the characteristic of $K$.

\subsection{The case $p \not=p_0$} \label{ssec:proof_2}

First, assume $p \not=p_0$. If all non-archimedean primes in $\mathcal{S}$ ramify in $L/K$, then \eqref{dagger} follows from \cite[Theorem 9.6.7(i)]{NSW08}. To reduce to this case, we replace $L$ by the compositum $LL'$ of $L$ and some finite Galois field extension $L'$ of $K$ which is linearly disjoint from $L$ over $K$, and which has specified local behaviour at primes $\mathfrak{P} \in \mathcal{S}$.

\begin{lemma} \label{lemma:L'}
There is a finite Galois field extension $L'$ of $K$ which is linearly disjoint from $L$ over $K$, and which satisfies the following for every prime $\mathfrak{P}\in\mathcal{S}$:

\vspace{0.5mm}

\noindent
{\rm{(1)}} if $\mathfrak{P}$ is non-archimedean and unramified in $L/K$, then the completion at $\mathfrak{P}$ of $L'/K$ ramifies and its degree is not divisible by $p$,

\vspace{0.5mm}

\noindent
{\rm{(2)}} if $\mathfrak{P}$ is either archimedean or non-archimedean and ramified in $L/K$, then $\mathfrak{P}$ is totally split in $L'/K$.
\end{lemma}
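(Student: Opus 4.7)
The strategy is to construct $L'$ by prescribing its local completions at each $\mathfrak{P}\in\mathcal{S}$ and realizing these prescriptions by a single global cyclic Galois extension, with linear disjointness from $L$ obtained via coprime degrees. The key tools are local class field theory (for the local prescriptions) and the Grunwald--Wang theorem (for the global realization).

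First, for each $\mathfrak{P}\in\mathcal{S}$ in case (1), I would pick a cyclic totally ramified Galois extension $M_\mathfrak{P}/K_\mathfrak{P}$ of some prime degree $\ell\neq p$ coprime to $[L:K]$. In the tame case (residue characteristic $\neq\ell$), the condition $\ell\mid q_\mathfrak{P}-1$ forces $\zeta_\ell\in K_\mathfrak{P}$, and one may take $M_\mathfrak{P}=K_\mathfrak{P}(\sqrt[\ell]{\pi_\mathfrak{P}})$ for a uniformizer $\pi_\mathfrak{P}$; in the wild case $\ell=\mathrm{char}(k_\mathfrak{P})\neq p$ (automatic in the function field case since $p\neq p_0$), an Artin--Schreier extension supplies such an $M_\mathfrak{P}$. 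For $\mathfrak{P}\in\mathcal{S}$ in case (2), set $M_\mathfrak{P}=K_\mathfrak{P}$.

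Next, I would apply Grunwald--Wang to produce a global cyclic extension $L'/K$ of degree $\ell$ whose completion at each $\mathfrak{P}\in\mathcal{S}$ is $K_\mathfrak{P}$-isomorphic to $M_\mathfrak{P}$. Choosing $\ell$ odd bypasses the Grunwald--Wang exceptional case, and choosing $\ell$ coprime to $[L:K]$ ensures $\gcd([L':K],[L:K])=1$, so that $L\cap L'$ has degree dividing $\gcd(\ell,[L:K])=1$ and therefore equals $K$, giving linear disjointness. By construction, at each $\mathfrak{P}\in\mathcal{S}$ of case (1) the completion of $L'/K$ is the ramified cyclic $M_\mathfrak{P}$ of degree $\ell$ (coprime to $p$), while each $\mathfrak{P}\in\mathcal{S}$ of case (2) is totally split.

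The main obstacle is selecting a single prime $\ell$ that works uniformly over all $\mathfrak{P}\in\mathcal{S}$ in case (1)---in particular, for number fields with small residue fields at primes above $p$, finding a common tame $\ell$ may require care. When no uniform $\ell$ exists, the construction can be performed prime-by-prime: for each $\mathfrak{P}\in\mathcal{S}$ in case (1) build an individual cyclic $L'_\mathfrak{P}/K$ via Grunwald--Wang that is ramified only at $\mathfrak{P}$ and totally split at every other prime of $\mathcal{S}$, and let $L'$ be the compositum; the coprime-to-$[L:K]$ condition on the individual degrees is preserved by compositum, maintaining linear disjointness from $L$.
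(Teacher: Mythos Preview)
Your Grunwald--Wang approach has a genuine gap when $p=2$. Take $K=\mathbb{Q}$ and $\mathfrak{P}=(2)$ in case~(1): you need a ramified cyclic extension $M_\mathfrak{P}/\mathbb{Q}_2$ of odd prime degree~$\ell$. But by local class field theory the inertia group in any abelian extension of $\mathbb{Q}_2$ is a quotient of $\mathbb{Z}_2^\times\cong\mathbb{Z}/2\times\mathbb{Z}_2$, hence a $2$-group; thus \emph{every} ramified abelian extension of $\mathbb{Q}_2$ has even degree. Your tame option requires $\ell\mid q_\mathfrak{P}-1=1$, your wild option gives $\ell=2=p$, and the prime-by-prime compositum cannot help since the obstruction is purely local. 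The same problem arises whenever $p=2$ and some case-(1) prime has residue field $\mathbb{F}_2$. A secondary issue: even when a suitable local $\ell_\mathfrak{P}$ exists, it need not be coprime to $[L:K]$---for instance at $\mathfrak{P}=(3)$ in $\mathbb{Q}$ the only prime degrees admitting a ramified cyclic extension are $\ell\in\{2,3\}$---so your linear-disjointness-via-coprimality argument can also fail.

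The paper's proof sidesteps this by dropping the abelian constraint. At a case-(1) prime with $p=2$ it uses a \emph{non-abelian} tame Galois extension of odd degree $3p'$, where $p'$ is an odd prime dividing $q^3-1$ (one always exists since $q^2+q+1$ is odd). These local Galois extensions are realized globally not by Grunwald--Wang but by weak approximation and Krasner's lemma applied to a single degree-$n$ polynomial, whose splitting field $L'$ is forced to have Galois group $S_n$ by three auxiliary primes contributing an $n$-cycle, an $(n-1)$-cycle, and a transposition. Linear disjointness from $L$ then comes from a ramification argument---a further auxiliary prime ramifies in the quadratic subfield $L'^{A_n}$ but not in $L$---combined with the fact that every proper normal subgroup of $S_n$ lies in $A_n$, rather than from any coprimality of degrees.
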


\begin{proof}
First, write $\mathcal{S} = \{\mathfrak{P}_1, \dots, \mathfrak{P}_r\}$. For $i = 1, \dots, r$, we let $F_{i}$ denote the following Galois field extension of $K_{\mathfrak{P}_i}$:
\begin{enumerate}[ (a)]
\item $F_{i}=K_{\mathfrak{P}_i}$ if $\mathfrak{P}_i$ is either archimedean or non-archimedean and ramified in $L/K$,
\item $F_{i}$ is a ramified quadratic field extension of $K_{\mathfrak{P}_i}$, if $p \not=2$ and $\mathfrak{P}_i$ is non-archimedean and unramified in $L/K$,
\item $F_{i}$ is a ramified finite Galois field extension of $K_{\mathfrak{P}_i}$ of odd degree, if $p=2$ and $\mathfrak{P}_i$ is non-archimedean and unramified in $L/K$.
\end{enumerate}
We briefly explain why an extension $F_i$ as in (c) exists: If $q$ is the cardinality of the residue field of $K$ at $\mathfrak{P}_i$,
then as $q^3-1 = (q-1)(q^2+q+1)$, there is some odd prime number $p'$ with $q^3 \equiv 1 \mbox{ mod }p'$. The latter congruence is a sufficient condition for the existence of a Galois extension $F_i$ of $K_{\mathfrak{P}_i}$ with ramification index $p'$ and residue degree $3$, see \cite[pp. 253-254]{Has80}. In particular, $F_i/K_{\mathfrak{P}_i}$ ramifies and $[F_i: K_{\mathfrak{P}_i}]= 3 p'$ is odd.

We also let $\mathfrak{P}_{r+1}$ be a prime of $K$ not in $\{\mathfrak{P}_1, \dots, \mathfrak{P}_r\}$ that is non-archimedean and unramified in $L/K$, and choose a ramified quadratic field extension $F_{{r+1}}$ of $K_{\mathfrak{P}_{r+1}}$. Moreover, let $n$ be an integer with $n \geq [F_i : K_{\mathfrak{P}_i}]$ for $i= 1 , \dots, r+1$. Let $\mathfrak{P}_{r+2}, \mathfrak{P}_{r+3}, \mathfrak{P}_{r+4}$ be distinct non-archimedean primes of $K$ not in $\{ \mathfrak{P}_{1} , \dots, \mathfrak{P}_{r+1}\}$. For $i=r+2, r+3, r+4$, let $F_i$ be the unramified Galois field extension of $K_{\mathfrak{P}_i}$ of degree $n_i$, where $(n_{r+2}, n_{r+3}, n_{r+4}) = (n, n-1, 2)$.

Now, for $i= 1 , \dots, r+4$, let $P_i(X) \in K_{\frak{P}_i}[X]$ be the minimal polynomial of a primitive element of $F_i$ over $K_{\frak{P}_i}$, and let $Q_i(X) \in K_{\frak{P}_i}[X]$ be a monic separable polynomial of degree $n$ which is the product of $P_i(X)$ and $n - [F_i : K_{\mathfrak{P}_i}]$ monic degree 1 polynomials with coefficients in $K_{\frak{P}_i}$. By the weak approximation theorem (see, e.g., \cite[Chapter XII, Theorem 1.2]{Lan02}) and Krasner's lemma (e.g., in the form of \cite[Proposition 12.3]{Jar91}), there exists a monic separable polynomial $Q(X) \in K[X]$ of degree $n$ which fulfills this property: 
\renewcommand{\theequation}{$4.2$}
\begin{equation}\label{dagger2}
\begin{minipage}{14.5cm}
{\it{For $i=1, \dots, r+4$, if $x_1, \dots, x_n$ are the roots of $Q(X)$, then the roots of $Q_i(X)$ can be enumerated as $y_{i,1}, \dots, y_{i,n}$ such that $K_{\frak{P}_i}(x_j) = K_{\frak{P}_i}(y_{i,j})$ for $j=1, \dots, n$.}}
\end{minipage}
\end{equation}
In particular, the splitting field $L'$ of $Q(X)$ over $K$ satisfies $L' K_{\frak{P}_i} = F_i$ for $i=1, \dots, r+4$. From the definition of $F_i$ for $i = 1, \dots, r$, we get that $L'/K$ fulfills (1) and (2) in the statement of the lemma.

Finally, we show the remaining claim that $L$ and $L'$ are linearly disjoint over $K$. For $i= r+2, r+3, r+4$, the definition of $Q_i(X)$ and \eqref{dagger2} yield that the Galois group $G_i$ of $Q(X)$ over $K_{\frak{P}_i}$ acts on $x_1, \dots, x_n$ as an $n_i$-cycle. Since $(n_{r+2}, n_{r+3}, n_{r+4}) = (n, n-1, 2)$ and $G_{r+2}, G_{r+3}, G_{r+4}$ are subgroups of the Galois group $G$ of $Q(X)$ over $K$, we get that $G$ contains an $n$-cycle, an $(n-1)$-cycle, and a transposition. Hence, $G=S_n$ (see, e.g., \cite[Lemma 4.4.3]{Ser92}). Moreover, we get similarly that the Galois group of $Q(X)$ over $K_{\frak{P}_{r+1}}$ acts on $x_1, \dots, x_n$ as a transposition, in particular as an odd permutation. Since $F_{r+1}/K_{\frak{P}_{r+1}}$ ramifies, we obtain that $\mathfrak{P}_{r+1}$ ramifies already in the quadratic subfield $L''=L'^{A_n}$ of $L'$ and, as $\mathfrak{P}_{r+1}$ is unramified in $L/K$, this implies that $L\cap L''=K$. As every proper normal subgroup of $S_{n}$ is contained in $A_{n}$, we eventually get that $L \cap L' = K$, as needed.
\end{proof}

\begin{remark}
In the case $p \not=2$, the proof shows that $L'/K$ may be chosen to be quadratic.
\end{remark}

\begin{proof}[Proof of \eqref{dagger} in the case $p \not=p_0$]
Let $n \geq 1$ be an integer and $\nu = (i,j)$. Consider the finite split embedding problem ${\rm{pr}} : \mathcal{F}_p(n)/\mathcal{F}_p(n)^{(\nu)} \rtimes {\rm{Gal}}(L/K) \rightarrow {\rm{Gal}}(L/K)$ over $K$, given by the projection on the second coordinate. Let $L'/K$ be as in Lemma \ref{lemma:L'}.

Since $L$ and $L'$ are linearly disjoint over $K$, the map 
$${\rm{res}} : \left \{ \begin{array} {ccc} $$
{\rm{Gal}}(LL'/K) & \rightarrow & {\rm{Gal}}(L/K) \times {\rm{Gal}}(L'/K) \\
\sigma & \mapsto & ({\rm{res}}^{LL'/K}_{L/K}(\sigma), {\rm{res}}^{LL'/K}_{L'/K}(\sigma))
\end{array} \right.$$
is an isomorphism. Then consider the finite embedding problem 
$$\alpha = {\rm{res}}^{-1} \circ ({\rm{pr}} \times {\rm{id}}_{{\rm{Gal}}(L'/K)}) : \left \{ \begin{array} {ccc}
( \mathcal{F}_p(n) / \mathcal{F}_p(n)^{(\nu)} \rtimes {\rm{Gal}}(L/K)) \times {\rm{Gal}}(L'/K) & \rightarrow & {\rm{Gal}}(LL'/K) \\
((x,y),z) & \mapsto & {\rm{res}}^{-1}(y,z)
\end{array} \right. $$ 
over $K$; it splits and has nilpotent kernel. As all non-archimedean primes in $\mathcal{S}$ are ra\-mi\-fied in $LL'/K$ and $p \not=p_0$, \cite[Theorem 9.6.7(i)]{NSW08} gives a solution
$$\beta : {\rm{Gal}}(F/K) \rightarrow (\mathcal{F}_p(n) / \mathcal{F}_p(n)^{(\nu)} \rtimes {\rm{Gal}}(L/K)) \times {\rm{Gal}}(L'/K)$$ 
to $\alpha$ such that every prime $\mathfrak{P} \in \mathcal{S}$ is totally split in $F/LL'$. Set 
$$M=F^{\beta^{-1}((\{1\} \times \{1\}) \times {\rm{Gal}}(L'/K))}.$$ 
Then $L \subseteq M$ and $\beta$ induces a solution ${\rm{Gal}}(M/K) \rightarrow \mathcal{F}_p(n) / \mathcal{F}_p(n)^{(\nu)} \rtimes {\rm{Gal}}(L/K)$ to ${\rm{pr}}$.

It remains to show that every prime $\mathfrak{P} \in \mathcal{S}$ is totally split in $M/L$. First, assume $\mathfrak{P}$ is non-archimedean and ramified in $L/K$. Then $\mathfrak{P}$ is unramified in $L'/K$. Hence, the ramification index at $\mathfrak{P}$ of $LL'/L$ is 1. As $\mathfrak{P}$ is totally split in $F/LL'$, we get that the ramification index at $\mathfrak{P}$ of $F/L$ is 1, and so the same holds for $M/L$. Moreover, denoting residue fields at $\mathfrak{P}$ by $\overline{\bullet}$, we have $\overline{F}= \overline{LL'}=\overline{L}\cdot\overline{L'}$ (see \cite[Lemma 2.4.8]{FJ08} for the last equality). Since $\overline{L'}=\overline{K}$, we get that $\overline{F} = \overline{L}$, and so $\overline{M} = \overline{L}$.

Now, assume $\mathfrak{P}$ is non-archimedean and unramified in $L/K$. Then $p$ does not divide the ramification index at $\mathfrak{P}$ of $L'/K$, and so does not divide that of $LL'/L$ either. As $\mathfrak{P}$ is totally split in $F/LL'$, we get that the ramification index at $\mathfrak{P}$ of $F/L$ is not divisible by $p$. Since $[M:L]$ is a power of $p$, the ramification index at $\mathfrak{P}$ of $M/L$ is then 1. The argument is similar for residue fields. Namely, with the notation from above, $[\overline{L'}:\overline{K}]$ and $p$ are coprime, and hence the same holds for $[\overline{LL'}:\overline{L}]$ and $p$. As $\mathfrak{P}$ is totally split in $F/LL'$, this implies that $p$ does not divide $[\overline{F}:\overline{L}]$, and so $p$ does not divide  $[\overline{M}: \overline{L}]$ either. As $[\overline{M}:\overline{L}]$ is a power of $p$, we get $\overline{M}=\overline{L}$.

Finally, assume $\mathfrak{P} \in \mathcal{S}$ is archimedean and $L_\mathfrak{P}=\mathbb{R}$. By the definition of $L'$, we have $(L')_{\mathfrak{P}}=K_\mathfrak{P}=\mathbb{R}$, hence $(LL')_\mathfrak{P}=\mathbb{R}$. Since $\mathfrak{P}$ is totally split in $F/LL'$, we get that $F_\mathfrak{P}=\mathbb{R}$. In particular, $M_\mathfrak{P}=\mathbb{R}$.
\end{proof}

\subsection{The case $p=p_0$} \label{ssec:proof_3}

Now, assume $p=p_0$. Given $n$ and $\nu$, consider the embedding
$$\alpha' : \left \{ \begin{array} {ccc} $$
{\rm{Gal}}(L/K) & \rightarrow &  \mathcal{F}_p(n) / \mathcal{F}_p(n)^{(\nu)} \rtimes {\rm{Gal}}(L/K) \\
\sigma & \mapsto & (1, \sigma)
\end{array} \right..$$
For a prime $\mathfrak{P}$ of $K$, set 
$$\psi_\mathfrak{P} = \alpha' \circ {\rm{res}}^{K^{\rm{sep}}/K}_{L/K} \circ {\rm{res}}_{K^{\rm{sep}}/K}^{(K_\mathfrak{P})^{\rm{sep}}/K_\mathfrak{P}} : {\rm{Gal}}((K_\mathfrak{P})^{\rm{sep}}/K_\mathfrak{P}) \rightarrow \mathcal{F}_p(n) / \mathcal{F}_p(n)^{(\nu)} \rtimes {\rm{Gal}}(L/K).$$
Since ${\rm{pr}} \circ \alpha' = {\rm{id}}_{{\rm{Gal}}(L/K)}$, we have ${\rm{pr}} \circ \psi_\mathfrak{P} =  {\rm{res}}^{K^{\rm{sep}}/K}_{L/K} \circ {\rm{res}}_{K^{\rm{sep}}/K}^{(K_\mathfrak{P})^{\rm{sep}}/K_\mathfrak{P}}$. 
Mo\-re\-over,  $\mathcal{F}_p(n) / \mathcal{F}_p(n)^{(\nu)}$ is a $p_0$-group. Hence, we may apply \cite[Theorem B]{JR19} to get that ${\rm{pr}}$ has a solution 
$${\rm{Gal}}(F/K) \rightarrow \mathcal{F}_p(n) / \mathcal{F}_p(n)^{(\nu)} \rtimes {\rm{Gal}}(L/K)$$ 
such that, for every prime $\mathfrak{P} \in \mathcal{S}$, the completion of $F$ at $\mathfrak{P}$ is the fixed field in $(K_\mathfrak{P})^{\rm{sep}}$ of ${\rm{ker}}(\psi_\mathfrak{P})$. As the latter is the completion of $L$ at $\mathfrak{P}$ (for every prime $\mathfrak{P}$ of $K$), this shows that \eqref{dagger} also holds in the case $p=p_0$, thus ending the proof of Theorem \ref{thm:intro_1}.

\bibliography{Biblio2}
\bibliographystyle{alpha}

\end{document}